\numberwithin{equation}{section}
\newtheorem{defn}{Definition}[section]
\newtheorem{theorem}{Theorem}[section]
\newtheorem{example}[theorem]{Example}
\newtheorem{lemma}[theorem]{Lemma}
\newtheorem{remark}[theorem]{Remark}
\def \begineq{\begin{equation}}
\def \endeq{\end{equation}}
\def \bb{\mathbb}
\def \bs{\boldsymbol}
\def \CC{{\bb{C}}}
\def \ZZ{{\bb{Z}}}
\def \({\left(}
\def \){\right)}
\def \<{\langle}
\def \>{\rangle}
\begin{document}
\title[Heegaard Floer invariants for cyclic 3-orbifolds]{Heegaard Floer invariants for cyclic 3-orbifolds}

\author[S. Ganguli]{Saibal Ganguli}
\address{Swamina International Pvt. Ltd., Kripa Bhavan, EP Block, Sector V, Salt Lake Electronic Complex, Kolkata 700091, India}
\email{saibalgan@gmail.com}

\author[M. Poddar]{Mainak Poddar}
\address{Department of Mathematics, 
IISER Pune, 
Dr. Homi Bhabha Road, Pashan, Pune 411008, India}
\email{mainak@iiserpune.ac.in}

\subjclass[2020]{57R58, 57R18, 57K35}

\keywords{Heegaard Floer homology, three dimensional orbifolds}

 \begin{abstract}
 We define a notion of Heegaard Floer homology for three dimensional orbifolds with arbitrary cyclic singularities, generalizing the recent work of Biji Wong where the singular locus is assumed to be connected.
 \end{abstract}
\maketitle

 \section{Introduction}
 Heegaard Floer homology, introduced  by Ozsvath and Szabo in \cite{{OZ}}, provides a package of powerful invariants for any $3$-manifold by cleverly associating a certain symplectic manifold (not unique) to it based on its Heegaard decomposition. The Lagrangian Floer homology of the associated sympletic manifold turns out to be an invariant of the $3$-manifold. Subsequently, a {\it cylindrical} reformulation of the theory was obtained by Lipshitz \cite{Lip} where the dimension of the associated symplectic manifold is four, making the theory more tractable in terms of visualization and computation. A generalization  of the theory for 3-manifolds with 
 connected boundary called  bordered Heegaard Floer homology was developed by Ozsvath,
 Lipschitz and Thurston \cite{LOT1, LOT2}. Their approach heavily uses the algebraic machinery of $\mathcal{A}_{\infty}$ modules and bimodules. 
 To each parametrized closed surface $F$ is associated an algebra, and to each $3$-manifold with boundary $F$ are associated two different kinds of module over the algebra, called the type-$A$ and type-$D$ modules.  If a closed $3$-manifold $Y =Y_1\cup_F Y_2 $ is decomposed along a separating surface F, then the
 Heegaard Floer homology $\widehat{HF}(Y)$ is isomorphic to the homology of a {\it box} or {\it derived} tensor product of the type-$A$ module of $Y_1$ and the type-$D$ module of $Y_2$. 
 
 Bordered Heegaard Floer theory was recently utilized by Wong \cite{BW} to introduce an invariant for $3$-orbifolds with cyclic singularity along a knot. The idea is to decompose the $3$-orbifold into a $3$-manifold with a torus boundary and an orbifold solid torus. Then the type-$D$ module of the orbifold part is taken to be essentially the type-$D$ module of its natural smooth branched cover. This is then joined with the type-$A$ module of the bordered manifold part by a box tensor product. This yields the required invariant. Note that extensive work has been done previously in providing homological invariants to $3$-dimensional orbifolds using gauge theoretic invariants such as Floer's instanton homology, see \cite{CS, KM}. 
 
   Three dimensional orbifolds are spaces which are locally quotients of $\mathbb{R}^{3}$ by  finite subgroups of $SO(3)$. Such an  orbifold is called cyclic if the local groups are cyclic. The orbifold singular locus of a cyclic $3$-orbifold is a disjoint union of knots. However, the underlying topological space of such an orbifold is homeomorphic to a manifold. The purpose of this article is to define the hat version of Heegaard Floer homology, $\widehat{HF}$, for arbitrary cyclic $3$-orbifolds, generalizing  Wong's work \cite{BW} where the orbifold singular locus is assumed to be connected. There is a challenge here as bordered Heegaard Floer homology is not well-developed  for manifolds with multiple boundary components. We strive to overcome this by initially smoothening out the orbifold singularities
   and reintroducing their algebraic contributions at a later stage in a sequential manner. A critical idea used by us is to sequentially modify the $A$-module structure of the bordered manifold part(s), so that it reflects the singularity of the component of the orbifold locus we deal with at each intermediate stage. The final stage is similar to Wong's construction. We obtain a smooth invariant of the cyclic $3$-orbifold which, in general, depends on the ordering of the components of its singular locus. This dependence can be relaxed in some very special cases, see Lemma \ref{equiv}, Example 
   \ref{example: lens space} and Example \ref{sub:r-surg}.
   
   The paper is organized as follows. We review the basics of bordered Heegaard Floer theory in Section 2. In the next section, we briefly review Wong's construction of Heegaard Floer homology for $3$-orbifolds with cyclic singularity along a single knot. We extend this theory to singularity along two knot components in Section 4. This section describes the main ideas and results of this work. Finally, we outline the construction of the invariant for arbitrary singular locus in Section 5.   
  
  \subsection*{Acknowledgements}
The authors would like to thank Robert Lipshitz, Dylan Thurston, Peter Ozsvath, and Biji Wong for helpful email correspondence. They also thank the referee for helpful comments and suggestions. The first author would like to thank IISER Pune for its support and hospitality during a three month visit in 2021  when the initial part of work was done, and Bhaskaracharya Pratisthana Pune for a postdoctoral fellowship where significant progress was made. The research of the second author was supported in part by a SERB MATRICS Grant: MTR/2019/001613.
 
 \section{Bordered Floer Homology}
 
 In this section we give a brief overview of bordered Floer homology for three manifolds with boundary based on the treatment in \cite{HRW, LOT1, LOT2, BW}. Our main goal is to set up the notation. We concentrate on the torus boundary case as it is sufficient for our purposes.
 
 \subsection{$A$ and $D$ structures}\label{A and D}
 
 Let $\mathcal{A}$ be the unital path algebra over $\mathbb{Z}_2$ associated to the quiver in  Figure \ref{figure1} (cf. Figure 1 in \cite{BW})  modulo the relations $\rho_2 \rho_1$, $\rho_3 \rho_2$. In other words, the
  composition of paths (between distinct vertices) is nontrivial only when their indices increase. As a $\mathbb{Z}_2$ vector space $\mathcal{A}$ is generated by eight elements:  two idempotents $i_1$
 and $i_2$, and six {\it Reeb} elements $\rho_1$, $\rho_2$, $\rho_3$, $\rho_{12} := \rho_1 \rho_2$, $\rho_{23} :=\rho_2 \rho_3$ and $\rho_{123} := \rho_1 \rho_2 \rho_3$. The multiplicative identity  of $\mathcal{A}$ is given by $\mathbf{1}:= i_1 + i_2$. We denote the multiplication operation of $\mathcal{A}$ by $\mu: \mathcal{A} \otimes \mathcal{A} \to \mathcal{A} $. Also, denote the subalgebra  of $\mathcal{A}$ generated by $i_1$ and $i_2$ by $\mathcal{I}$.

     


\begin{figure}
\begin{tikzpicture}[scale=.5]

\draw [->] (5,-2)--(0,-2);
\draw[thick, ->] (0,-1.8) arc (180:0:2.5cm);
\draw[thick, ->] (0, -2.2) arc (180:360:2.5cm);

\node at (2.5, 0.2) {$\rho_1$};
\node at (2.5,-1.5) {$\rho_2$};
\node at (2.5, -3.8) {$\rho_3$};
\node at (-0.5, -2) {$i_1$};
\node at (5.5, -2) {$i_2$};

\end{tikzpicture}
\caption{ Quiver related to the path algebra $\mathcal{A}$ }
\label{figure1}
\end{figure}
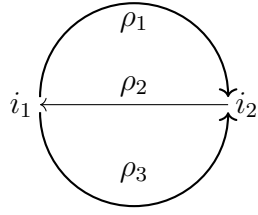

 A (left) type $D$ structure over $\mathcal{A}$ is a pair $(N,\delta_1)$ where $N$ is a finite dimensional $\mathbb{Z}_2$-vector space   equipped with a (left) action by $\mathcal{I}$ so that
 \begin{equation}
  N=i_1{N} \oplus i_2{N} \,
 \end{equation}
 and  $\delta_1: N \rightarrow  \mathcal{A} \otimes_{\mathcal{I}} N$ is a map that satisfies the following relation
 \begin{equation}\label{eq:delta1}
  (\mu \otimes id_N) \circ (id_\mathcal{A} \otimes \delta_1) \circ \delta_1=0 \, .
 \end{equation}
 We can inductively define maps
$\delta_k$ such that $\delta_k = (id_{{\mathcal{A}}^{{\otimes}^{k-1}}} \otimes \delta_1 ) \circ \delta_{k-1}$. The type $D$ structure is said to be {\it bounded} if $\delta_k =0$ for  all sufficiently large k.

It is convenient to represent type $D$ structures $(N,\delta_1)$  by decorated directed graphs. Construct a basis for $N$ by fixing bases for each subspace $i_{*}N$. Assign a vertex to each basis element. Decorate a vertex  with a $\bullet$ sign if the basis element belongs to $i_{1}N$, otherwise   decorate it with a $\circ$ sign.  When basis elements $x$ and $y$ satisfy the condition: $\rho_{I} \otimes y$ is a summand  of $\delta_1(x)$ with $\rho_{I} \in\{\rho_{\phi}=\mathbf{1},\rho_1,\rho_2,\rho_3,\rho_{12},\rho_{23},\rho_{123}\}$, introduce a directed edge from
vertex $x$ to vertex $y$, and label the edge with $\rho_{I}$ . The relation \eqref{eq:delta1} on $\delta_{1}$ translates to a condition on the graph: For any directed path of length $2$, the labels have product $0$ in $\mathcal{A}$. The higher maps $\delta_k$ can be encoded using directed paths of length $k$.  Conversely, a type $D$ structure can also be recovered from a graph.

A {\em (right) type $A$ structure over} $\mathcal{A}$ is a pair
$(M, {\{m_k\}}^{\infty}_{k=1})$
consisting of a finite-dimensional $\mathbb{Z}_2$-vector space $M$ with a right action of $\mathcal{I}$ so that
\begin{equation}
 M= M i_1 \oplus M i_2
\end{equation}
 as a vector space,
 and maps  
 \begin{equation}
  m_k: M \otimes_{\mathcal I} \mathcal{A}^{\otimes k-1}  \rightarrow M 
 \end{equation}
that satisfy the relation, 
\begin{equation}\label{compat}
	\begin{array}{l}
  {\sum}^{k}_{j=1} m_{k-j+1}(m_{j}(x,a_1\otimes a_2 \ldots \otimes a_{j-1}),a_j,\ldots, a_{k-1}) \\
   \vspace{0.1cm}\\
   +{\sum}^{k-2}_{j=1}m_{k-1}(x,a_1 \otimes\ldots a_j\otimes a_j a_{j+1}\otimes a_{j+2}\ldots  \otimes a_{k-1}) = 0 \,,
  \end{array}
  \end{equation}
for all $x \in M$, $k \in \mathbb{N}$ and $a_1, \ldots, a_{k-1} \in \mathcal{A}$

 A type $A$ structure  is {\em bounded} if $m_k = 0$ for all sufficiently large values of $k$. 
 If $(M, {\{m_k\}}^{\infty}_{k=1}$ is a type $A$ structure over 
 $\mathcal{A}$ and  $(N,\delta_1)$ is a type $D$ structure over $\mathcal{A}$ such that at least one of them is bounded, then we can define a box tensor product
  $M \boxtimes N$, a $\mathbb{Z}_2$-chain complex $(M \otimes_{\mathcal{I}} N, {\delta}^{\boxtimes})$ where $\delta^{\boxtimes}: M\otimes_{\mathcal{I}} N\rightarrow M \otimes_{\mathcal{I}} N$ given by
 \begin{equation}  
 \delta^{\boxtimes}(x \otimes y)={\sum}^{\infty}_{k=0}(m_{k+1} \otimes id_N )(x \otimes {\delta}_{k}(y)) \,. 
 \end{equation}


\subsection{Bordered three manifolds}

We closely follow the description in \cite[Section 2.2.2]{BW}, but include the definitions and notations for the convenience of the reader. 

 A bordered 3-manifold is defined to be a pair $(Y, \phi)$
where $Y$ is a compact, connected, oriented 3-manifold with connected boundary, and $\phi$ is a homeomorphism from a fixed model surface $F$ to the boundary of $Y$. Two bordered
3-manifolds $(Y_1, \phi_1)$ and $(Y_2, \phi_2)$ are said to be equivalent if there exists an orientation-preserving
homeomorphism $\psi \colon  Y_1 \rightarrow Y_2$ satisfying $\phi_2 = \psi|_{\partial} \circ \phi_1$.

We focus our attention on the case when $\partial Y$ is a torus. In this case, the surface $F$ is an oriented
torus associated to the pointed matched circle $\mathcal{Z}$ in Figure \ref{figure2}  (cf. Figure 2 \cite{BW}). This association is  explained in \cite[Definition 3.17]{LOT2}. Note that $\mathcal{Z}$ contains three Reeb chords $\rho_1, \rho_2, \rho_3 $ which may be used to generate a path algebra as described in the previous section. 

     

\begin{figure}
\begin{tikzpicture}[scale=.5]

\draw[thick, ->] (0,3) arc (90:-270:3cm);

\coordinate (1) at (1,2.8); 
\coordinate (2) at (1,-2.8);
\coordinate (3) at (3,0);
\coordinate (4) at (-2,-2.2);

 \draw[red][-] (1) to[out=15,in=-15,looseness=3.5] (2) ;
\draw[red][-] (3) to[out=-45,in=-60,looseness=3.5] (4) ;

\node at (-3.5,0) {$z$};
\node at (6,0) {$\alpha_1^a$};
\node at (1,-4.7) {$\alpha_2^a$};
\node at (1.8,1.8) {$\rho_1$};
\node at (1.8,-1.8) {$\rho_2$};
\node at (0,-2.8) {$\rho_3$};
\node at (-3,0)[circle,fill,inner sep=1pt]{};
\node at (1,2.8)[circle,fill,inner sep=1pt]{};
\node at (1,-2.8)[circle,fill,inner sep=1pt]{};
\node at (3,0)[circle,fill,inner sep=1pt]{};
\node at (-2,-2.2)[circle,fill,inner sep=1pt]{};

\end{tikzpicture}
\caption{ Pointed matched circle $\mathcal{Z}$ }
\label{figure2}
\end{figure}
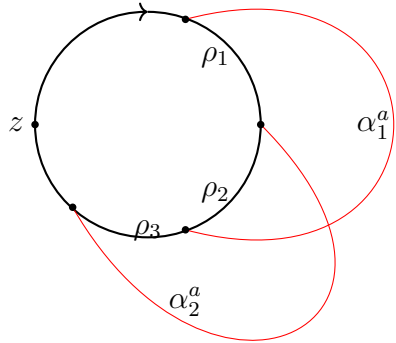

 The orientation of $F$ is given by $(\alpha^{a}_1,\alpha^{a}_2)$.
 If $\phi$ preserves orientation, then $(Y, \phi)$ is said to be of type $A$. Otherwise, it is said to be type $D$.
 
   A bordered 3-manifold $(Y, \phi)$  can be represented by a bordered Heegaard diagram.
   A bordered Heegaard diagram $\mathcal{H}$, in the torus boundary case, is a tuple
$$(\overline{\Sigma}; \{{\alpha}^{c}_1 \ldots {\alpha}^{c}_{g-1}\};\{ {\alpha}^{a}_1,{\alpha}^{a}_2 \};\{\beta_1 \ldots \beta_g \},z)$$
whose constituents are described as follows:
 \begin{enumerate}
 \item $\overline{\Sigma}$ is a compact, connected, oriented surface  of genus $g$ whose boundary is also connected.
\item  Two sets $\boldsymbol{\alpha}^{c} := \{{\alpha}^{c}_1 \ldots {\alpha}^{c}_{g-1}\}$ and $\boldsymbol{\beta} := \{\beta_1 \ldots \beta_g \}$  of pairwise disjoint circles that lie in the interior of $\overline{\Sigma}$.
\item   A set $\bs{\alpha}^a := \{ {\alpha}^{a}_1,  {\alpha}^{a}_2 \} $ that consists of two disjoint properly embedded arcs in $\overline{\Sigma}$  with 
boundary on $\partial\overline{\Sigma}$.
\item  A point $z$ on $\partial\overline{\Sigma}$ which is distinct from the endpoints of the two arcs ${\alpha}^{a}_1$ and ${\alpha}^{a}_2$. 
\end{enumerate}
 Furthermore, $\boldsymbol{\alpha}^{c}$ and $\boldsymbol{\alpha}^{a}$ are required to be disjoint, and $\overline{\Sigma } - \boldsymbol{\alpha}^a \cup \boldsymbol{\alpha}^c $ and $\overline{\Sigma} - \boldsymbol{ \beta} $ are connected.
 We denote $\boldsymbol{\alpha}^a \cup \boldsymbol{\alpha}^c $ by $\bs{\alpha}$.
 
  The manifold $Y$ can be recovered from  $\mathcal{H}$ by
attaching three dimensional $2$-handles to $\overline{\Sigma} \times I$ along the $\boldsymbol{{\alpha}}^{c}$
circles in $\overline{\Sigma} \times \{0\}$ and the $ \boldsymbol{\beta}$ circles in $\overline{\Sigma} \times \{1\}$, see \cite[Construction 4.6]{LOT2} for details. The
parametrization $\phi$ of $\partial Y$ is determined by the pointed matched circle $(\partial \overline{\Sigma}, {\alpha}^{a}_1,{\alpha}^{a}_2,z)$,
and $\partial \overline{\Sigma}$ is endowed with the  boundary orientation. If $(\partial \overline{\Sigma}, {\alpha}^{a}_1,{\alpha}^{a}_2,z)$
 is identified with $\mathcal{Z}$, then $\phi$ is orientation-preserving, and $\mathcal{H}$  gives rise to a  bordered $3$-manifold
$(Y, \phi)$ of type $A$. Otherwise, we have an identification of  $(\partial \Sigma, {\alpha}^{a}_1, {\alpha}^{a}_2,z)$ 
 with $-\mathcal{Z}$, and $\mathcal{H}$ yields a bordered
$3$-manifold of type $D$.  An example of a type $D$ bordered $3$-manifold is given in Figure \ref{figure3} (cf. \cite[Figure 3]{BW}).

     %

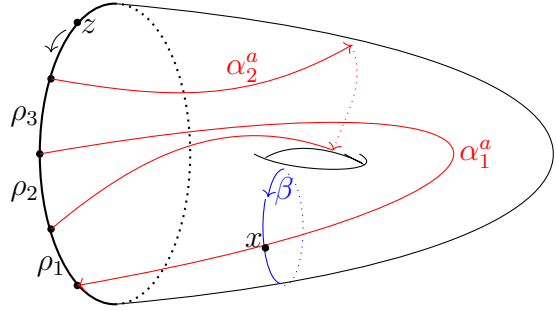
\begin{figure}
\begin{tikzpicture}[scale=.5]

\coordinate (1) at (0,4);
\coordinate (2) at (0,-4);

\coordinate (3) at (-1.3, 3.3);
\coordinate (4) at (-1.7, 2.7);

\draw[->] (3) to[bend right] (4);


\draw[thick, -] (0,4) arc (90:270:2cm and 4cm);
\draw[dotted, thick, -] (0,-4) arc (270:450:2cm and 4cm) ;


\draw (1) to[out=-5,in= 5,looseness= 5] (2) ;

\node at (-.7,3.4) {$z$};
\node at (-1,3.5)[circle,fill,inner sep=1pt]{};
 
\node at (-1.7,2)[circle,fill,inner sep=1pt]{};
\node at (-2.4,1) {$\rho_3$};
\node at (-2,0)[circle,fill,inner sep=1pt]{};
\node at (-2.4,-1) {$\rho_2$};
\node at (-1.7,-2)[circle,fill,inner sep=1pt]{};
\node at (-1.7,-3) {$\rho_1$};
\node at (-1,-3.5)[circle,fill,inner sep=1pt]{};

\draw[-] (3.7,0) to[out=-30,in=330,looseness=1] (6.5,0);
\draw[-] (4,-.1) to[out=30,in=-30,looseness=2] (6.1,0);

\draw[blue][->] (4.5,-.4) arc(90:150:.5cm and 1.5cm);
\draw[blue][-] (4,-1.2) arc(150:270:.5cm and 1.5cm);
\draw[blue][dotted,-] (4.5,-3.5) arc(270:450:.5cm and 1.5cm);

\draw[red][->] (-2,0) to[out=10,in=10,looseness=10] (-1,-3.5); 

\node at (4,-2.5)[circle,fill,inner sep=1pt]{};
\node at (3.7,-2.3) {$x$};
\node[blue] at (4.5,-1) {$\beta$};
\node[red] at (9.6,0) {$\alpha_1^a$};

\draw[red][->] (-1.7,2) to[out=-10,in=-150,looseness=1] (6.3,2.9); 

\draw[red][dotted,->] (6.3,2.9) to[out=-70,in=70,looseness=1] (5.8,0.1); 

\draw[red][-] (-1.7,-2) to[out=40,in=160,looseness=1] (5.8,0.1);

\node[red] at (3.4,2.3) {$\alpha_2^a$};

\end{tikzpicture}
\caption{ A genus $1$ bordered Heegaard diagram }
\label{figure3}
\end{figure}

   Consider a bordered Heegaard diagram $\mathcal{H}$ representing a bordered $3$-manifold $(Y, \phi)$. Then, bordered Floer theory \cite{LOT1, LOT2} associates a
type $A$ structure $(\widehat{CFA}(\{\mathcal{H}\})$, $\{{m}_k\}^{\infty}_{k=1})$
 to $\mathcal{H}$ if $(Y, \phi)$ is of type $A$. Similarly, it associates a type $D$ structure $(\widehat{CFD}(\{\mathcal{H}\}, \delta_1))$ to $\mathcal{H}$
if $(Y, \phi)$ is of type $D$. As  vector spaces over $\mathbb{Z}_2$ , $\widehat{CFA} (\{\mathcal{H}\})$ and $\widehat{CFD}(\{\mathcal{H}\})$ are generated by $g$-tuples 
of points  $\bs{x}$ in $\bs{\alpha} \cap \bs{\beta}$,  such that there is one point on each $\bs{\alpha}^{c}$ and
	 $\bs{\beta}$ circle, and there is one point 
on one of the $\bs{\alpha}^{a}$ arcs.

 There are natural right and left actions of the  subalgebra $\mathcal{I}$ of the path algebra corresponding to the pointed matched circle $\mathcal{Z}$ on  $\widehat{CFA}(\mathcal{H})$. The right action of $i_1$  preserves a generator $\bs{x}$ or annihilates  it depending on whether it lies on the  ${\alpha}^a_1$ or the  ${\alpha}^a_2$  arc, respectively. The left action of $i_1$ is just the opposite. See \cite[p.8]{BW} for precise formulae.

The type $A$ and type $D$ structure maps
$$ m_k \colon  \widehat{CFA} (\mathcal{H})\otimes_{\mathcal{I}}\mathcal{A} \ldots \otimes_{\mathcal{I}} \mathcal{A}
\rightarrow \widehat{CFA} (\mathcal{H}) $$ 
and
$$\delta_1 \colon \widehat{CFD}(\mathcal{H}) \rightarrow \mathcal{A} \otimes  \widehat{CFD}(\mathcal{H})$$
are defined by counting certain $J$-holomorphic curves in $\Sigma \times [0, 1] \times \mathbb{R}$, for a sufficiently
nice almost complex structure $J$ on $\Sigma \times [0, 1] \times \mathbb{R}$, where $\Sigma$ is the interior of $\overline{\Sigma}$. Details can
be found in \cite[Chapters 6-7]{LOT2}.  The type $A$ and type
$D$ structures $\widehat{CFA}(\mathcal{H}, \{m_k\}^{\infty}_{k=1})$
 and $\widehat{CFD}(\mathcal{H},\delta_1)$ are  independent of the choice of $J$ up to homotopy equivalence. Thus, they produce invariants of $\mathcal{H}$. Moreover, different bordered Heegaard diagrams for equivalent
bordered $3$-manifolds produce homotopy equivalent bordered invariants. Thus, one  obtains an invariant of a bordered $3$-manifold $(Y, \phi)$ that depends only on its equivalence class. If $(Y, \phi)$ is
of type $A$,  the invariant is denoted by $\widehat{CFA} [(Y, \phi)]$, and if $(Y, \phi)$ is of type $D$, it is denoted by $\widehat{CFD}[(Y, \phi)]$.

The following basic example is taken from \cite{BW}. Consider $D^{2} \times  S^{1}$
 having boundary parametrization $\psi \colon F \rightarrow \partial(D^{2} \times S^{1})$ given by ${\alpha}^{a}_1 \rightarrow {1} \times S^{1}$
 and ${\alpha}^{a}_2 \rightarrow \partial D^{2} \times {1}$.
 Then from the bordered Heegaard diagram in \ref{figure3} (cf.
 \cite[Figure 3]{BW}) for $(D^{2} \times S^{1},\psi)$
 it follows that $\widehat{CFD} [(D^{2} \times S_1
, \psi)]$ is given by the graph in Figure \ref{figure4} (cf. Figure 4 in \cite{BW}) below. This 
will serve as a stepping stone for the corresponding graph (Figure \ref{figure5}) when there is 
orbifold singularity along the core $S^1$.

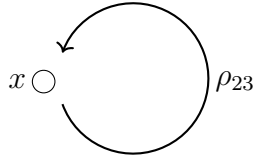
\begin{figure}
\begin{tikzpicture}[scale=.5]

\draw[-]  (0,0) arc (0:360:.3cm);
\node at ((-1,0) {$x$};

\draw[thick, ->] (0.2,-.6) arc(-160:160:2cm);

\node at (4.8,0) {$\rho_{23}$};

\end{tikzpicture}
\caption{ Type $D$ structure for $ (D^2 \times S^1, \psi)$ }
\label{figure4}
\end{figure}

 A type $DA$ structure is a $\mathbb{Z}_2$-vector space $N$ with the structure of an
$(\mathcal{I, I})$--bimodule, together with maps 
 $${\delta}^{k}_1:  N\otimes \mathcal{A}^{\otimes k-1} \rightarrow \mathcal{A}\otimes N \,.$$
See \cite{LOT1}, Definition 2.2.43, for details.
As in the case of type $D$ and $A$ structures, it is possible to define the box tensor product of a type $DA$ structure with a type $D$ structure, or that
 of a type $A$ structure with a type $DA$ structure, when at least one of the
factors is bounded  (cf. \cite[Definition 2.3.9]{LOT1}).
 If the parametrization of the boundary of $(Y, \phi)$ is changed, the bordered invariants
$\widehat{CFA}[(Y, \phi)]$ and $\widehat{CFD} [(Y, \phi)]$ change by 
an orientation-preserving homeomorphism $\psi$ of the model torus $F$. Then, by \cite[Theorem 2]{LOT1}, there exists a type $DA$
structure $\widehat{CFDA} (\psi)$ so that
$$\widehat{CFDA} (\psi) \boxtimes \widehat{CFD} [(Y, \phi)] \cong \widehat{CFD} [(Y, \phi \circ \psi)]$$
 as type D structures over $\mathcal{A}$, and
 $$\widehat{CFA} [(Y, \phi)]\boxtimes  \widehat{CFDA}(\psi) \cong \widehat{CFA}[(Y, \phi \circ \psi)]$$
 as type $A$ structures over $\mathcal{A}$.
 
 Consider a type $A$ bordered $3$-manifold $ (Y_1, \phi_1)$ and a type $D$ bordered $3$-manifold $(Y_2, \phi_2)$. Construct a closed, oriented, smooth 3-manifold $Y$ by gluing $Y_1$ and $Y_2$ along
their boundaries via the homeomorphism $\phi_2 \circ {\phi_1}^{-1} \colon \partial Y_1 \rightarrow \partial Y_2$. Associate the bordered invariants $\widehat{CFA} [(Y_1, \phi_1)]$ and $\widehat{CFD} [(Y_2, \phi_2)]$ to $(Y_1, \phi_1)$ and $(Y_2, \phi_2)$, respectively. 
 To $Y$,  associate the hat flavor of Heegaard Floer homology $\widehat{HF}(Y )$. The pairing theorem of bordered Heegaard Floer theory 
 implies that if at least one of the bordered invariants is bounded, then they determine $\widehat{HF}(Y)$ as follows:
\begin{equation}
\widehat{HF}(Y) =  {H}_{*}(\widehat{CFA}[Y_1] \boxtimes \widehat{CFD}[Y_2]) \,.
\end{equation}
We follow a similar approach to define Heegaard Floer invariants for closed $3$-orbifolds in the sequel. 

\section{Orbifold Heegard Floer for one component knot singularity}

In this section we review the construction of orbifold Heegard Floer homology by Wong \cite{BW} in the case where the singular locus has exactly one component. 
 
Let $Y^{orb}$
 be a compact, connected, oriented $3$-dimensional orbifold whose
singular locus ia a knot $K$  with cyclic singularity of order $n$. Fix a neighbourhood $N$ of $K$ modelled on
 $(D^{2} \times S^{1})/ \mathbb{Z}_n$
 and
an orientation-preserving homeomorphism $$\phi_N \colon (D^{2} \times S^{1})/\mathbb{Z}_n\rightarrow N \,.$$
Here the generator of $\mathbb{Z}_n$ acts on $D^2$ through a rotation by $2\pi/n$, and trivially on $S^1$. 
 This induces an orientation-preserving parametrization of the boundary
$$\phi_{\partial {N}}\colon  \partial(D^{2} \times S^{1})/\mathbb{Z}_n \rightarrow \partial N \,. $$

There is a natural orientation-reversing identification of the oriented torus $F$ associated to
the pointed matched circle $\mathcal{Z}$ from Figure \ref{figure2} 
with $\partial((D^{2} \times S^{1})/\mathbb{Z}_n)$,
 taking ${\alpha}^{a}_1$
to the longitude
$\{1\}\times S^{1}$
 and ${\alpha}^{a}_2$
to the meridian $\partial D^{2}/\mathbb{Z}_n \times \{1\}$. Using this, we regard $\phi_{\partial N}$ as an 
orientation-reversing parametrization of $\partial N$ by $F$.
The complement of the singular neighbourhood $N$ in $Y^{orb}$ is a
$3$-manifold E with torus boundary. Using the orientation-reversing parametrization $\phi_{\partial N}$ of
$\partial N$, define the following orientation preserving parametrization $\phi_{\partial E}$ of $\partial E$ 
\begin{equation}
\phi_{\partial E} := id \circ \phi_{\partial N} \colon F \rightarrow \partial E.
\end{equation}
  Then $E$, together with $\phi_{\partial E}$, forms a type $A$ bordered $3$-manifold. To $(E, \phi_{\partial E})$ we associate
the type $A$ structure $\widehat{CFA}[(E, \phi_{E})]$ coming from bordered Floer theory.
Generalizing the type $D$ structure $\widehat{CFD}[(D^{2} \times S^1,\psi )$ in Figure \ref{figure4}, we associate to the
singular piece $N$ the type $D$ structure $D_N$ in Figure \ref{figure5} (cf. Figure 5 \cite{BW}).

\begin{figure}
\begin{tikzpicture}[scale=.5]

\draw (2,2) circle(.3);
\node at (2.7,2.5) {$x_n$};

\draw[->] (1.5,2.5) to[out=160,in=20,looseness=1.5] (- 1.5,2.5);
\node at (0,3.5) {$\rho_{23}$};

\draw (-2,2) circle(.3);
\node at (-2.7,2.5) {$x_1$};

\draw[->] (-2.5,1.5) to[out= 230,in= 75,looseness=1.5] (- 3.8,-.5);
\node at (-4.2,.6) {$\rho_{23}$};

\draw (-4,-1) circle(.3);
\node at (-4.7,-1.5) {$x_2$};

\draw[->] (3.8, -.5) to[out=100,in=-20,looseness=1.5] ( 2.5,1.5);
\node at (4.3,.8) {$\rho_{23}$};

\draw (4,-1) circle(.3);
\node at (4.9,-1.5) {$x_{n-1}$};

\draw[->] (-3.8, -1.5) to[out=-60,in=150 ,looseness=1] ( -2.2,-2.8);
\node at (-3.1,-3) {$\rho_{23}$};

\node at (-1.7,-3)[circle,fill,inner sep=1pt]{};

\node at (-1,-3.2)[circle,fill,inner sep=1pt]{};

\node at (-0.3,-3.3)[circle,fill,inner sep=1pt]{};

\node at (.5,-3.3)[circle,fill,inner sep=1pt]{};

\node at (1.2,-3.2)[circle,fill,inner sep=1pt]{};

\draw[->] (1.6, -3.1) to[out=15,in=270,looseness=1] ( 3.7,-1.8);
\node at (2.9,-3.3) {$\rho_{23}$};




\end{tikzpicture}
\caption{ The structure $D_N$  }
\label{figure5}
\end{figure}
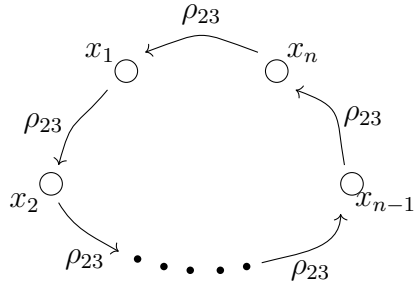
 
Similar to $\widehat{CFD} [{D}^{2} \times S^1,\psi]$, $D_N$ arises naturally from an orbifold bordered Heegaard
diagram  for $(N, \phi_{\partial_N} )$; see Figure \ref{figure6} (cf. Figure 6 \cite{BW}). 

   %

\begin{figure}
\begin{tikzpicture}[scale=.5]

\coordinate (1) at (0,4);
\coordinate (2) at (0,-4);

\coordinate (3) at (-1.3, 3.3);
\coordinate (4) at (-1.7, 2.7);

\draw[->] (3) to[bend right] (4);


\draw[thick, -] (0,4) arc (90:270:2cm and 4cm);
\draw[dotted, thick, -] (0,-4) arc (270:450:2cm and 4cm) ;


\draw (1) to[out=-5,in= 5,looseness= 5] (2) ;

\node at (-.7,3.4) {$z$};
\node at (-1,3.5)[circle,fill,inner sep=1pt]{};
 
\node at (-1.7,2)[circle,fill,inner sep=1pt]{};
\node at (-2.4,1) {$\rho_3$};
\node at (-2,0)[circle,fill,inner sep=1pt]{};
\node at (-2.4,-1) {$\rho_2$};
\node at (-1.7,-2)[circle,fill,inner sep=1pt]{};
\node at (-1.7,-3) {$\rho_1$};
\node at (-1,-3.5)[circle,fill,inner sep=1pt]{};

\draw[-] (3.7,0) to[out=-30,in=330,looseness=1] (6.5,0);
\draw[-] (4,-.1) to[out=30,in=-30,looseness=2] (6.1,0);

\draw[blue][->] (4.5,-.4) arc(90:150:.5cm and 1.5cm);
\draw[blue][->] (4,-1.2) arc(150:270:.5cm and 1.5cm);

\draw[blue][dotted,->] (4.5,-3.5) to[out=65,in=-75,looseness=1] (5,-.4);

\draw[blue][->] (5,-.4) to[out=-140,in=110,looseness=.8] (5,-3.3);

\draw[blue][dotted,->] (5,-3.3) to[out=65,in=-50,looseness=1.5] (4.5,-.4);

\node at (4.7,-2.3)[circle,fill,inner sep=1pt]{};

\node at (5.3,-2.6) {$x_2$};


\draw[red][->] (-2,0) to[out=10,in=10,looseness=10] (-1,-3.5); 

\node at (4,-2.5)[circle,fill,inner sep=1pt]{};
\node at (3.5,-2.2) {$x_1$};
\node[blue] at (3.5,-1) {$\beta$};
\node[red] at (9.6,0) {$\alpha_1^a$};

\draw[red][->] (-1.7,2) to[out=-10,in=-150,looseness=1] (6.3,2.9); 

\draw[red][dotted,->] (6.3,2.9) to[out=-70,in=70,looseness=1] (5.8,0.1); 

\draw[red][-] (-1.7,-2) to[out=40,in=160,looseness=1] (5.8,0.1);

\node[red] at (3.4,2.3) {$\alpha_2^a$};

\end{tikzpicture}
\caption{ A genus $1$ orbifold bordered Heegaard diagram for $n=2$ }
\label{figure6}
\end{figure}
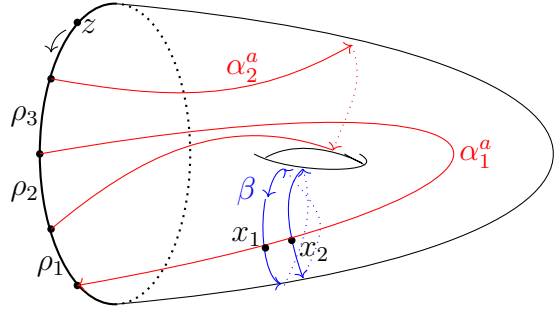
    
Here we begin with a $\mathbb{Z}_n$-equivariant (quotient) torus that has
been punctured once on the boundary $\partial N$, together with two properly embedded arcs ${\alpha}^{a}_1$
and ${\alpha}^{a}_2$.
 As before, $\beta$ represents a meridian of an honest handlebody.
However, $\beta$  is now  immersed in the punctured $\mathbb{Z}_n$-equivariant torus, wrapping $n$ times
around ${\alpha}^{a}_2$.
 In other words, $\beta$ represents one full meridian, while ${\alpha}^{a}_2$
represents a meridian of the quotient
 solid torus $N$ under the $\mathbb{Z}_n$-action, that is, an $n^{th}$ of a full meridian. The generators $x_i$ of the type
$D$ structure $D_N$ correspond to the intersection points of $\beta $  and  ${\alpha}^{a}_1$.

\subsection{Definition}
 Let $\widehat{CFO}[(Y^{orb}]$
 be the box tensor product of $\widehat{CFA}[(E,\phi_E)]$ and  $D_N$. We define
$\widehat{HFO}[(Y^{orb}]$
 to be the homology of $\widehat{CFO}[(Y^{orb}]$.
\begin{remark}\label{nice}
 This makes sense only if the type $A$ structure is bounded. We will restrict to structures coming from nice diagrams, cf. \cite[Corollary 8.6]{LOT2}, which have $m_k=0$
 for $k \geq 3$.
\end{remark}

Wong  has shown in \cite{BW} the above homology  is well defined and is a diffeomorphism
invariant.

\section{Orbifold Heegaard Floer for two component singular locus}

We now address the case of two component singular locus.
Consider the orbifold $Y$ with cyclic singularities along the knots $K_1$ and $K_2$  of order $n_1$ and $n_2 $, respectively. We denote this data by $(Y,K_1,n_1,K_2,n_2)$. 
There exist neighbourhoods $N_i$ of $K_i$ such that there is a parametrization $\phi_i$ of $N_i$ by $({D}^{2} \times S^{1})/ \mathbb{Z}_{n_i}$ where the action on $ D^{2} \times S^{1}$ is the standard rotation action on $D^{2}$ with axis $S^{1}$.  Let $Y_i$ denote the orbifold with boundary $Y-N_i$, for $i=1,2$.

 Let  $Y^{M}_1$ be the manifold with boundary obtained from $Y_1$ by changing the parametrization of $N_2$ to $D^{2} \times  S^{1}$, that is, by replacing $N_2$ by its branched cover in the definition of $\phi_2$.  Similarly, $Y^M_2$ is defined to be the manifold with boundary obtained by replacing $N_1$ in $Y_2$ by its branched cover appearing in the definition of $\phi_1$.
 A parametrization for the boundary of $Y^{M}_i$ is obtained by the process outlined in Section 3, making $Y^M_i$ into a bordered 3-manifold. 
 Let $Y^{M}$ be the manifold obtained by replacing both $N_1$ and $N_2$ with their corresponding smooth branched covers. 

 \subsection{More $A$ structures}
 Here we define some more $A$ structures in order to define the orbifold Floer homology. 
 But, first, we mention the path algebra  required in torus boundary case, see \cite{LOT2}, Chapter 11, for details.
 The differential $\mu_1$ of the path algebra is zero and also $\mu_i = 0$ for $i > 2$. So, only $\mu_2$ is nonzero and considering it as an ordinary multiplication, the nonzero products are as follows.
 \begin{equation}
  \rho_1 \rho_2 = \rho_{12}
 \end{equation}
 \begin{equation}
 \rho_2 \rho_3= \rho_{23}
  \end{equation}
 \begin{equation}
 \rho_1 \rho_{23}=\rho_{123}
  \end{equation}
  \begin{equation}
  \rho_{12} \rho_{3}= \rho_{123}
 \end{equation}

 Our goal is to construct an $A$-structure  $\widehat{CFA}^{orb} [Y_2,\overline{m}_k]$ such that we have the following isomorphism of box tensor products, 
 $$\widehat{CFA}^{orb}[Y_2] \boxtimes D_{1} \cong \widehat{CFA}[Y^{M}_1]\boxtimes D_{n_1} \,.$$  
 
 As vector spaces,
 $$\widehat{CFA}^{orb}[Y_2,\overline{m_k}] \cong {\bigoplus}^{n_1}_{j=1} \widehat{CFA}[Y^{M}_1,{m}_k]\,.$$  
 
 Denote the vector $(0, \ldots, 0, y, 0, \ldots,0)$, where $y$ occupies the $j$-th position, by $(y : j)$. Also, for an integer $j$,  define 
 \begin{equation}\label{boxbra}
  [j] := \left\{ \begin{array}{ll} n_1 & {\rm if }\, n_1 \, {\rm divides} \, j \\
  j \,{\rm mod}\, n_1 \, & {\rm otherwise} \end{array} \right. \end{equation}

 The operator 
 $\overline{m}_k$ is defined by 
 \begin{equation}
  \overline{m}_k( (y:j),a_1, \ldots
a_{k-1})= ( m_k(y,a_1,\ldots a_{k-1}) : [j+l]) \,,
  \end{equation}
 where $l$ is the number of times the elements of the set $\{\rho_{23},\rho_{3},\rho_{123}\}$ occur in $(a_1, a_2, \ldots, a_{k-1})$. 
 In other words, we shift by one position for each $\rho_{23},\rho_{3},\rho_{123}$ entry and we do not offset any entry for other elements of the algebra. 
 
 \begin{lemma}\label{composition laws} 
 The operators $\overline{m}_k$ statisfy the the structural equations \eqref{compat} of an $A$ structure.  
 
 \end{lemma}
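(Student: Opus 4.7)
The plan is to reduce the $A_\infty$ relation for $\overline{m}_k$ to the known $A_\infty$ relation for $m_k$ by showing that every term produced on the left-hand side of \eqref{compat} lands in a single summand of $\bigoplus_{j=1}^{n_1}\widehat{CFA}[Y^M_1,m_k]$ indexed by a fixed position, so that the whole relation factors as a position-stamped copy of the relation for $m_k$.

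For a sequence of Reeb elements $(a_1,\ldots,a_{k-1})$, let $\ell(a_1,\ldots,a_{k-1})$ denote the number of entries lying in $S := \{\rho_3,\rho_{23},\rho_{123}\}$. First I would record two elementary facts. \textbf{(i) Additivity under composition.} Applying $\overline{m}_j$ to $((y:i),a_1,\ldots,a_{j-1})$ produces $(m_j(y,a_1,\ldots,a_{j-1}):[i+\ell(a_1,\ldots,a_{j-1})])$; feeding this into $\overline{m}_{k-j+1}$ with the remaining arguments shifts the position by a further $\ell(a_j,\ldots,a_{k-1})$. Since $[\,\cdot\,]$ is simply reduction mod $n_1$ (with $n_1$ standing for $0$) by \eqref{boxbra}, these shifts add, and the final position is $[i+\ell(a_1,\ldots,a_{k-1})]$, independent of the split index $j$. \textbf{(ii) Invariance under algebra multiplication.} The four nonzero products in $\mathcal{A}$, namely $\rho_1\rho_2=\rho_{12}$, $\rho_2\rho_3=\rho_{23}$, $\rho_1\rho_{23}=\rho_{123}$, $\rho_{12}\rho_3=\rho_{123}$, each preserve the count in $S$: the first maps $0\mapsto 0$ and the remaining three map $1\mapsto 1$. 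Hence replacing $a_j,a_{j+1}$ by $a_ja_{j+1}$ in the second sum of \eqref{compat} does not change $\ell(a_1,\ldots,a_{k-1})$.

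Combining (i) and (ii), every summand appearing on the left-hand side of the $A_\infty$ relation for $\overline{m}_k$, evaluated on $((y:i),a_1,\ldots,a_{k-1})$, lies in the single positional slot $[i+\ell(a_1,\ldots,a_{k-1})]$. Extracting the content of that slot yields precisely the expression in \eqref{compat} for $m_k$ applied to $(y,a_1,\ldots,a_{k-1})$. Since $m_k$ satisfies \eqref{compat} on $\widehat{CFA}[Y^M_1]$ by bordered Floer theory, this sum vanishes, and so does the original sum for $\overline{m}_k$.

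The only substantive obstacle is the case-check in (ii): it is exactly what guarantees that the ``combine two adjacent inputs'' summand of \eqref{compat} occupies the same positional slot as the ``compose two $\overline{m}$'s'' summand, so that cancellation occurs after projection to each slot. Once (ii) is verified, (i) is automatic from the definition of $\overline{m}_k$ and \eqref{boxbra}, and the reduction to the $A_\infty$ relation for $m_k$ is immediate.
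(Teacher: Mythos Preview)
Your proof is correct and follows essentially the same approach as the paper: both reduce \eqref{compat} for $\overline{m}_k$ to the known relation for $m_k$ by checking that the positional shift is the same across all summands, with the key step being the case-check that the nonzero products in $\mathcal{A}$ preserve the count of inputs from $\{\rho_3,\rho_{23},\rho_{123}\}$. Your write-up is in fact slightly more complete, since you verify all four nonzero products explicitly, whereas the paper only spells out $\rho_2\rho_3=\rho_{23}$ and $\rho_1\rho_{23}=\rho_{123}$ and leaves the remaining cases implicit.
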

 
 \begin{proof} It suffices to check that the shift of
 	 $$ {m}_{k-j+1}({m}_{j}((y:i),a_1\otimes a_2 \ldots \otimes a_{j-1}),a_j \otimes \ldots \otimes a_{k-1}) $$ in 
 $$ \overline{m}_{k-j+1}(\overline{m}_{j}((y:i),a_1\otimes a_2 \ldots \otimes a_{j-1}),a_j \otimes \ldots \otimes a_{k-1}) $$
 matches the shift of  $${m}_{k-1}((y:i),a_1 \otimes\ldots a_j\otimes a_j a_{j+1}\otimes a_{j+2}\ldots  \otimes a_{k-1})  \, $$ in
  $$\overline{m}_{k-1}((y:i),a_1 \otimes\ldots a_j\otimes a_j a_{j+1}\otimes a_{j+2}\ldots  \otimes a_{k-1})  \, .$$

\noindent This is straightforward except when $a_j a_{j+1}$  is nontrivial and a $\rho_{23}$ is created or destroyed through this product. These can be verified on a case by case basis. For instance, when $a_j=\rho_2$ and $a_{j+1} = \rho_3$, their contribution in the shift of the first term is one as $\rho_3$ contributes but $\rho_2$ does not.   
In the second term also the product $\rho_2 \rho_3 = \rho_{23}$ contributes one to the shift. Similarly, consider the
case $a_j = \rho_1$ and $a_{j+1} = \rho_{23}$. Here, only $\rho_{23}$ contributes a shift to the first term and $\rho_{123}$ has a matching contribution to the second term.
 \end{proof} 

 
  

 \begin{lemma}\label{equiv2} There is a natural isomorphism of box tensor products
  $$T:   \widehat{CFA}^{orb}[Y_2] \boxtimes D_{1}  \longrightarrow \widehat{CFA}[Y^{M}_1] \boxtimes D_{n_1} \, .$$  
 \end{lemma}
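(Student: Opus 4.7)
The plan is to define $T$ on the natural basis: send $(y:j) \otimes x \mapsto y \otimes x_j$, where $x$ is the unique generator of $D_1$ and $x_1, \ldots, x_{n_1}$ are the generators of $D_{n_1}$, while $y$ runs over a basis of $\widehat{CFA}[Y^M_1]$. Since both underlying $\mathbb{Z}_2$-vector spaces are naturally identified with $\bigoplus_{j=1}^{n_1} \widehat{CFA}[Y^M_1]$, the map $T$ is immediately a linear bijection, and the only real content is to verify that it intertwines the box-tensor differentials on the two sides.

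To do this I would unravel $\delta^{\boxtimes}$ on each side, exploiting the very simple form of the two type $D$ structures: both $D_1$ and $D_{n_1}$ carry only $\rho_{23}$-labelled edges, and iterating $\delta_1$ gives $\delta_k(x) = \rho_{23}^{\otimes k} \otimes x$ for $D_1$ and $\delta_k(x_j) = \rho_{23}^{\otimes k} \otimes x_{[j+k]}$ for $D_{n_1}$. Plugging these into the box-tensor formula reduces each side to a sum of terms of the form $m_{k+1}(y, \rho_{23}, \ldots, \rho_{23})$ tensored with a generator of the corresponding type $D$ structure; the only potential discrepancy is in the position-shift bookkeeping for the summand of $\widehat{CFA}^{orb}[Y_2]$.

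The key observation is that $\rho_{23}$ belongs to the distinguished set $\{\rho_3, \rho_{23}, \rho_{123}\}$ that governs the shift in the definition of $\overline{m}_{k+1}$. Hence when the $k$ algebra inputs are all $\rho_{23}$, the shift count $l$ in that definition equals $k$ exactly, and the left-hand summand lies in the $[j+k]$-th copy of $\widehat{CFA}[Y^M_1]$. On the right-hand side, the cyclic type $D$ structure $D_{n_1}$ sends $x_j$ to $x_{[j+k]}$. These two shifts match verbatim, so the chain-map identity $T \circ \delta^{\boxtimes}_{\mathrm{LHS}} = \delta^{\boxtimes}_{\mathrm{RHS}} \circ T$ holds summand by summand.

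Finally I would note that both box products are well defined: by Remark \ref{nice} we may restrict to nice type $A$ structures with $m_k = 0$ for $k \geq 3$, which by inspection of the shift formula forces $\overline{m}_k = 0$ for $k \geq 3$ as well, so the operator $\widehat{CFA}^{orb}[Y_2]$ is also bounded. Naturality of $T$ with respect to the chosen basis decompositions is immediate. I do not anticipate a genuine obstacle: the maps $\overline{m}_k$ were engineered precisely so that tensoring with the trivial $D_1$ simulates the cyclic shift produced by tensoring with $D_{n_1}$, and the proof amounts to verifying this match on the nose.
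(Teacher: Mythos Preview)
Your proposal is correct and follows essentially the same approach as the paper: define $T((y:j)\otimes x)=y\otimes x_j$, unwind both box differentials using $\delta_k(x)=\rho_{23}^{\otimes k}\otimes x$ and $\delta_k(x_j)=\rho_{23}^{\otimes k}\otimes x_{[j+k]}$, and observe that since every algebra input is $\rho_{23}$ the shift $l$ in $\overline{m}_{k+1}$ equals $k$, so both sides reduce to $\sum_k m_{k+1}(y,\rho_{23}^{\otimes k})\otimes x_{[j+k]}$. Your added remark on boundedness via nice diagrams is a reasonable supplement but not present in the paper's own proof.
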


\begin{proof} Define 
$$T ((y:j) \otimes x) = y \otimes x_j \,.$$ 
 Then $T$ naturally extends to a vector space isomorphism by linearity. Observe that
 
 $$ 
 \begin{array}{ll}  & T \left( \delta^{\boxtimes} ((y:j) \otimes x) \right)   \\
  & \\
 =  & T \left( \sum_{k=0}^{\infty} (\overline{m}_{k+1} \otimes {\rm id} ) ( (y:j) \otimes \delta_k(x))  \right) \\
 &\\
 
   =  & T \left( \sum_{k=0}^{\infty} (\overline{m}_{k+1} \otimes {\rm id} ) ( (y:j) \otimes \rho_{23}^{\otimes k} \otimes x )  \right) \\
 
 &\\
 
  =  & T \left( \sum_{k=0}^{\infty} (\overline{m}_{k+1}  ( (y:j) \otimes \rho_{23}^{\otimes k})   \otimes x)  \right) \\
  
  &\\
  
  = & T \left( \sum_{k=0}^{\infty} ({m}_{k+1}  ( y \otimes \rho_{23}^{\otimes k}): [j+k])   \otimes x \right) \\
 
 &\\
 
 = &  \sum_{k=0}^{\infty}  {m}_{k+1}  ( y \otimes \rho_{23}^{\otimes k})  \otimes x_{[j+k]}  \,.
 
 \end{array}
 $$
 
\noindent  On the other hand, 

$$ \begin{array}{ll}  &   \delta^{\boxtimes} (T ((y:j) \otimes x) )   \\

&\\

= & \delta^{\boxtimes} (y \otimes x_j)    \\

&\\ 

= & \sum_{k=0}^{\infty} ({m}_{k+1} \otimes {\rm id}) (y \otimes \delta_k(x_j) ) \\

&\\

= & \sum_{k=0}^{\infty} ({m}_{k+1} \otimes {\rm id}) (y \otimes \rho_{23}^{\otimes k} \otimes x_{[j +k ]} ) \\

&\\

= &  \sum_{k=0}^{\infty}  {m}_{k+1}  ( y \otimes \rho_{23}^{\otimes k})  \otimes x_{[j+k]}  \,. \\

\end{array}
  $$
 \medskip
 
 \noindent Thus $T $ commutes with $\delta^{\boxtimes}$. This completes the proof.


\end{proof}


 

\begin{defn} We define an orbifold Heegaard Floer homology of $Y^{orb}$ by
	$$\widehat{HFO}^{12}_{*}(Y^{orb}) =  H_{*}(\widehat{{CFA}}^{orb}[Y_2] \boxtimes D_{n_2})\,. $$
\end{defn}

Switching the roles of $N_1$ and $N_2$ in the above constructions, we may similarly define $\widehat{CFA}^{orb}[Y_1]$ and a corresponding orbifold Heegaard Floer homology $\widehat{HFO}^{21}_{*}(Y^{orb})$ of $Y^{orb}$ by
\begin{equation}
\widehat{HFO}^{21}_{*}(Y^{orb}) =  H_{*}(\widehat{CFA}^{orb}[Y_1] \boxtimes D_{n_1}) 
\end{equation}

\subsection{$DA$ structures and invariance}
 A $DA$ structure over an algebra with a generating set $N$ is defined through a series of maps
 $$ {\delta}^{j}_1: N \otimes {A}^{j-1}   \rightarrow A \otimes N \,.$$

These maps induce a series of maps ${\delta}^{j}_k$, $k \geq 1$, defined on page 555 of \cite{LOT1} (in their notation  $k$ is a prefix, we follow the notation of \cite{BW}).  Since we restrict to nice diagrams, we only need the $k=1$ case.

A box product of an $A$ structure with a $DA$ structure
yields an $A$ structure with differentials $m^{box}_i$ given by
\begin{equation}
m^{box}_i ={ \sum_{j=0}}^{\infty} (m_{j+1}\otimes \mathbb{I}_{DA}) \circ (\mathbb{I}_A \otimes {\delta}^{i}_j)
\end{equation}
where ${{\delta}^{i}}_0 = \mathcal{I}$, $\mathbb{I}_A$ and $\mathbb{I}_{DA}$ are the identity maps on the $A$ structure and $DA$ structure respectively, and the $m_j$ are the differentials of the initial $A$ structure.

\begin{theorem}
 The homology $\widehat{HFO}^{12}_{*}(Y^{orb})$ is well defined, and is a diffeomorphism invariant.
\end{theorem}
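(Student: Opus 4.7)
The plan is to reduce the theorem to two independent verifications: (i) that $\widehat{CFA}^{orb}[Y_2]$ is well-defined as a type-$A$ structure up to $A_\infty$-homotopy equivalence, depending only on the bordered equivalence class of $Y_2^M$ together with the unordered orbifold datum $(K_1, n_1)$ inside it; and (ii) that the pairing with $D_{n_2}$ and ambient diffeomorphisms of $Y^{orb}$ interact correctly with this construction. Granting (i), the usual pairing theorem of bordered Floer theory, combined with Wong's invariance result for $D_{n_2}$ from Section~3, then produces an invariant of $Y^{orb}$.

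For (i), I first note that changing the nice bordered Heegaard diagram for $Y_2^M$ gives an $A_\infty$-homotopy equivalence $f = \{f_k\}$ between the two resulting $\widehat{CFA}[Y_2^M]$. I would lift $f$ to a morphism $\bar f = \{\bar f_k\}$ between the corresponding $\widehat{CFA}^{orb}[Y_2]$ by the same shift rule used for $\bar m_k$, namely
\begin{equation*}
  \bar f_k((y : j), a_1, \ldots, a_{k-1}) = (f_k(y, a_1, \ldots, a_{k-1}) : [j + l]),
\end{equation*}
where $l$ counts the occurrences of elements from $\{\rho_{23}, \rho_3, \rho_{123}\}$ among $a_1, \ldots, a_{k-1}$. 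The $A_\infty$-morphism relations for $\bar f$ reduce, term by term, to the $A_\infty$-morphism relations for $f$ plus a bookkeeping check on the index shifts, and this bookkeeping is identical to the case analysis in Lemma~\ref{composition laws} (for example, $\rho_2$ and $\rho_3$ contribute a single shift to the first side, matched by the shift created by the product $\rho_{23}$ on the other side). The same lifting works for homotopy inverses and null homotopies, so $\widehat{CFA}^{orb}[Y_2]$ depends only on the bordered equivalence class of $Y_2^M$ and on $n_1$.

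For (ii), a change of parametrization $\psi$ of $\partial N_2$ acts on $\widehat{CFA}[Y_2^M]$ by the type-$DA$ bimodule $\widehat{CFDA}(\psi)$. I would verify that box-tensoring on the right with $\widehat{CFDA}(\psi)$ commutes (up to natural isomorphism of type-$A$ structures) with the orbifold modification producing $\widehat{CFA}^{orb}[Y_2]$; concretely, the $\bar{m}_k^{\mathrm{box}}$ of $\widehat{CFA}^{orb}[Y_2] \boxtimes \widehat{CFDA}(\psi)$ should coincide with the operations obtained by first applying $\psi$ to $Y_2^M$ and then performing the orbifold modification. This boils down to the same case check as above, since the shift is governed solely by which Reeb elements enter the $A_\infty$-operations and since $\widehat{CFDA}(\psi)$ reshuffles these Reeb inputs without affecting their count of $\rho_{23}/\rho_3/\rho_{123}$ contributions up to the cancellations already accounted for. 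An ambient orientation-preserving diffeomorphism $\Phi$ of $Y^{orb}$ preserving the ordering of the singular components can, after isotopy, be arranged to carry $N_1$ and $N_2$ to themselves, inducing a bordered equivalence of $Y_2^M$ and an equivalence of the parametrized solid torus $N_2$; invariance then follows by combining this with part (i).

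The main obstacle, and the only point that is not a direct transcription of existing work, is the compatibility in (ii): that $\widehat{CFDA}(\psi) \boxtimes (-)$ commutes with the index-shift construction. This is not a formal consequence of the definitions, because the structure maps of $\widehat{CFDA}(\psi)$ can output Reeb sequences whose $\rho_{23}$-count differs from the input; one must check that every such discrepancy is compensated by a product relation in $\mathcal{A}$ in exactly the same way as in Lemma~\ref{composition laws}. This is an explicit finite check on the type-$DA$ bimodules generating the mapping class group of the torus, and I expect it to succeed precisely because the multiplication table of $\mathcal{A}$ is what dictates the shift in the first place.
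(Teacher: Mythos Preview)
Your approach is essentially the same as the paper's: both lift the relevant $A_\infty$-equivalences and $DA$-bimodule actions to the $n_1$-fold direct sum using the identical shift rule, and both identify the crux as checking that the shift induced by the output Reeb elements of $\widehat{CFDA}(\psi)$ matches the shift induced by the inputs. The organization differs slightly---you separate diagram invariance (your (i)) from parametrization invariance (your (ii)), while the paper jumps directly to the Dehn-twist case---but the substance coincides.

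Two points of comparison are worth noting. First, where you leave the compatibility in (ii) as ``an explicit finite check\ldots I expect to succeed,'' the paper actually carries out that check, but only for the Dehn twist around $\alpha_2^a$ and only after observing that, since the eventual box product is with $D_{n_2}$, one may restrict to the subalgebra generated by $\rho_{23}$. With that restriction the relevant $\delta^2_1$ and $\delta^1_1$ terms of $\widehat{CFDA}(\psi)$ are very short (essentially $\delta^2_1(\mathbf{q},\rho_{23})=\rho_{23}\otimes\mathbf{q}$ and $\delta^1_1(\mathbf{r})=\rho_2\otimes\mathbf{q}$), and the shift is visibly preserved. Your formulation, asking for compatibility at the level of type-$A$ structures before pairing with $D_{n_2}$, is slightly stronger than what is needed and would require checking more of the bimodule; the paper's shortcut of restricting to the $\rho_{23}$-subalgebra is what makes the verification genuinely finite and easy. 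Second, your treatment of (i) (lifting $A_\infty$-homotopy equivalences of nice diagrams) is cleaner than the paper's, which handles diagram/neighbourhood changes somewhat tersely by invoking ambient isotopy at the end; but again the underlying idea---apply the shift rule to the components of the equivalence and cite the case analysis of Lemma~\ref{composition laws}---is identical.
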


\begin{proof}  Suppose the parametrizations are changed by a factor $\psi$
 where $\psi$ is a  Dehn twist around $\alpha_2^{a}$.
  The $A$ structure  $\widehat{CFA}^{orb}[Y_2]$
  depends on the $A $ structure of
 $\widehat{CFA}[Y^{M}_1]$. Denoting the $A$ structures corresponding to the two parametrizations
 by $\widehat{CFA}^{orb}[Y_2,\phi]$ and $\widehat{CFA}^{orb}[Y_2,\phi \circ \psi^{-1}]$,  
 we show that the homology does not depend on the $\psi$ factor.

Let $m^{box}$ be the differential of $\widehat{CFA}[Y^{M}_1](\phi) \boxtimes  CFDA (\psi)$.  
For a  Dehn twist $\psi$, we define an $A$ structure
$\widehat{CFA}^{orb}[Y_2,\phi ] \widehat{\otimes} CFDA[\psi]$
 with differential
 \begin{equation}\label{mbarbar}
\overline{\overline{m_i}}((x\otimes y: j), a_1, \ldots ,a_{i-1})=( m^{box}_{i}(x \otimes y,a_1 \ldots a_{i-1}): [j + l] )
 \end{equation}
 where  $l$ is number of terms in $(a_1, \ldots ,a_{i-1} )$ which belong to the set $\{\rho_3,\rho_{23},\rho_{123}\}$, and $[j+l]$ is defined according to \eqref{boxbra}. 
 
 We may choose  nice diagrams (see Remark \ref{nice}) so that from $m_3$ onwards the differentials of $\widehat{CFA}[Y^{M}_1](\phi)$  and $\widehat{CFA}[Y^{M}_1](\phi \circ \psi^{-1})$ are zero.
 Let $t_1$ be the equivalence between the $A$ structures  $\widehat{CFA}[Y_1^{M}](\phi \circ \psi^{-1})$ and  
 $ \widehat{CFA}[{Y_1}^{M}](\phi) \boxtimes CFDA(\psi)$.
 Now, define a morphism $T_1$ (cf. \cite[Definition 2.9]{LOT2})  between the two $A$ structures
$\widehat{CFA}^{orb}[Y_2,\phi \circ \psi^{-1}] $ and  $\widehat{CFA}^{orb}[Y_2,\phi ]\widehat{\otimes} CFDA(\psi)$
 such that
 \begin{equation}
 {T_1}_{i}( ( x :j), a_1,\ldots ,a_{i-1})=( {t_1}_i(x,a_1 \ldots a_{i-1}) : [j+l])
 \end{equation}
 where $l$ is defined as above. 
 
 Since $t_1$ is an isomorphism, it follows that $T_1$ is also an isomorphism.
However, some care is needed: Note that the shift in the differential $\overline{\overline{m_i}} $ of $\widehat{\otimes}$, and the shift calculated naturally after applying the differential
$m^{box}_i $, have some differences. 
When taking the differential $\overline{\overline{m_i}}$, the shift is determined by the $a_i'$s, see
\eqref{mbarbar}.
But in $m^{box}$,
 the $m_i'$s get factored by 
 ${\delta}^{i}_{j}$ which changes the arguments $a_i'${s}. This can affect the shifts. 
 We need to consider $m_1$ and $m_2$ only as noted earlier.
 First, consider  $m_2((x\otimes y :j), a_1)$.  In the case of $\widehat{\otimes}$, the shift is determined completely by $a_1$, but in the latter case the expression has terms like  $x\otimes{\delta}^{2}_1(y,a_1)$. 
 The ${\delta}^{2}_1(y,a_1)$ is equal to a sum of terms of the form $b_i\otimes z_i$, where $b_i \in A$. The shifts are now determined by the $b_i'$s. 
 Lemma 3.4 of \cite{BW} contains a list of the terms with nontrivial differentials.
 However, for a Dehn twist around $ \alpha_2^{a}$, we may restrict to the subalgebra of the path algebra generated by $\rho_{23}$.
  Then, there
 is only one term with one non-zero ${\delta}^{2}_1$ to consider, 
 $${\delta}^{2}_1( {\bf q} \otimes \rho_{23}) = \rho_{23} \otimes {\bf q} \, ,$$
 where ${\bf q}$ is one of the generators of $CFDA(\psi)$. 
 So, the shift is preserved. Finally, consider $m_1$.
 Then, there is a nontrivial  ${\delta}^{1}_1$ term, ${\delta}^{1}_1 ({\bf r}) = \rho_2 \otimes {\bf q} $.  As $\rho_2$ does not contribute to shifts under our assumptions, no problems arise.  
 
 
 Then, by the argument in \cite[Lemma 3.4]{BW}  using the edge reduction algorithm, 
 we have 
 \begin {equation}
 H_{*}( \widehat{CFA}^{orb}[Y_2,\phi \circ \psi^{-1}]\boxtimes  D_{n_2})=H_{*}{(({\widehat{CFA}^{orb}[Y_2,\phi ]\boxtimes CFDA[\psi ]}})\boxtimes D_{n_2}) \,.
 \end {equation}
 Since the left hand side $A$ structure  comes from a nice diagram and the right hand side $A$ structure is also bounded due to the restriction, the homology is well defined.
 

 It has been
shown in \cite{BW} that a general $\psi$ is isotopic to a product of Dehn twists. If it is exactly a product of Dehn twists, the above argument suffices. If it is an isotopy, then the $A$ structures on  $Y^{M}_1$ are bordered equivalent since the isotopy results in an ambient isotopy $F_t$, and $F_1$ gives the required diffeomorphism. The equivalence of the two $A$ structures on $Y^{M}_1$  implies the equivalence of two orbifold  $A$ structures  on $Y_2$ using relevant shifts.  This completes the proof of parametrization invariance.

To prove neighbourhood invariance, we again follow \cite{BW}. Neighbourhoods of the same knot  can be deformed to one another by an ambient isotopy $F_t$ and the diffeomorphism $F_1$ makes the bordered manifolds bordered equivalent. The equivalence in the orbifold $A$ structure follows by defining proper shifts.
%
\end{proof}

\begin{remark}
 Taking either $n_i=1$ by Lemma \ref{equiv2} our homology invariants coincide and equal the invariant of one singular component defined in \cite{BW} which generalizes the hat version of Heegaard Floer homology of manifolds. Also, since every bordered manifold
 has a nice diagram, see \cite[Proposition 8.2]{LOT2}, we can choose a bounded  $A$ structure.
 \end{remark}
 
 There are a limited number of situations when we are certain that the two invariants $\widehat{HFO}^{12}(Y^{orb})$ and $\widehat{HFO}^{21}(Y^{orb})$ actually are the same. 
 
 \begin{lemma}\label{equiv} 
  The bordered manifolds $Y^{M}_i$, $i = 1, 2$, are equivalent when they are both handlebodies. In this case, the rank of the two homology invariants,  $\widehat{HFO}^{12}(Y^{orb})$ and   $\widehat{HFO}^{21}(Y^{orb})$, coincide.
 \end{lemma}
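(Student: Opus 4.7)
The plan is to separate the statement into two assertions: first, a bordered equivalence $(Y^M_1, \phi_1) \cong (Y^M_2, \phi_2)$, and second, the resulting equality of ranks of $\widehat{HFO}^{12}(Y^{orb})$ and $\widehat{HFO}^{21}(Y^{orb})$.

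For the first assertion, note that a handlebody with torus boundary is a solid torus. Let $N_i^M \cong D^2 \times S^1$ denote the smooth branched cover of $N_i$. The closed manifold $Y^M = Y^M_i \cup_F N_i^M$ is then a union of two solid tori for each $i$, so $Y^M$ is either $S^3$, $S^1 \times S^2$, or a lens space. By the convention of Section 3, the boundary parametrization sends the arc $\alpha_2^a$ onto the meridian $\partial D^2 \times \{1\}$ of $N_i^M$ and the arc $\alpha_1^a$ onto the longitude $\{1\} \times S^1$, for both $i = 1, 2$. Consequently, the meridian slope of $Y^M_i$ on the parametrized boundary torus $F$ is determined by the fixed homeomorphism type of $Y^M$ together with the common meridian $\alpha_2^a$ of $N_i^M$, and hence coincides for $i = 1, 2$. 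Since two bordered solid tori with matching meridian slopes on their parametrized boundaries are equivalent as bordered $3$-manifolds, we obtain $(Y^M_1, \phi_1) \cong (Y^M_2, \phi_2)$.

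For the second assertion, the bordered equivalence and the invariance of $\widehat{CFA}$ give a homotopy equivalence $\widehat{CFA}[Y^M_1] \simeq \widehat{CFA}[Y^M_2]$; write $M$ for a common model with operations $m_k$. As $\mathbb{Z}_2$-vector spaces,
$$ \widehat{CFA}^{orb}[Y_2] \boxtimes D_{n_2} \;\cong\; M \otimes (\mathbb{Z}_2)^{n_1} \otimes (\mathbb{Z}_2)^{n_2}, $$
with the discrete factors indexed by $\mathbb{Z}_{n_1} \times \mathbb{Z}_{n_2}$, and an analogous description holds for $\widehat{CFA}^{orb}[Y_1] \boxtimes D_{n_1}$ with the roles of the two indices swapped. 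Because $D_{n}$ has only $\rho_{23}$-labelled edges, the box differential on either complex is a sum of terms of the form $m_{k+1}(-, \rho_{23}^{\otimes k})$, and by the shift rule defining $\widehat{CFA}^{orb}$ each $\rho_{23}$ advances both discrete coordinates by one. Swapping the two coordinates therefore defines a chain isomorphism between the two complexes, so the homologies agree, and in particular their ranks coincide.

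The main obstacle is the first assertion: one must upgrade topological equivalence of the two underlying solid tori to a bordered equivalence respecting the parametrization by the pointed matched circle $\mathcal{Z}$. The key leverage is that the arc $\alpha_2^a$ is, by construction, a meridian of both $N_1^M$ and $N_2^M$ simultaneously, so the choice of $Y^M$ pins down the remaining meridian slopes on $F$.
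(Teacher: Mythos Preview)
Your overall strategy matches the paper's: reduce to the genus-one classification $S^3$, $S^2\times S^1$, $L(p,q)$, and then argue that the two bordered solid tori carry the same parametrized boundary data. However, your first assertion is asserted rather than proved. You write that ``the meridian slope of $Y^M_i$ on the parametrized boundary torus $F$ is determined by the fixed homeomorphism type of $Y^M$ together with the common meridian $\alpha_2^a$ of $N_i^M$.'' This is precisely the delicate point, and it is not true without further argument: orientation-preserving diffeomorphisms $L(p,q)\cong L(p,q')$ with $q\not\equiv q'\pmod p$ exist (e.g.\ when $qq'\equiv 1\pmod p$), so two bordered solid tori with \emph{different} meridian slopes on $F$ can yield the \emph{same} oriented $Y^M$ after $\alpha_2^a$-filling. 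The paper confronts exactly this ambiguity by working through the gluing matrices case by case and invoking orientation considerations (and the explicit diffeomorphisms $f_m$ of the solid torus) to show that the apparently distinct parametrizations are in fact bordered equivalent. You have skipped this core step; without it, ``two bordered solid tori with matching meridian slopes are equivalent'' is correct but its hypothesis is unverified.

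Your second assertion, by contrast, goes beyond what the paper writes out. The paper stops after establishing bordered equivalence of the $Y^M_i$ and leaves the rank equality implicit. Your coordinate-swap argument is a clean way to make that step explicit: once one fixes a common model $M$ for $\widehat{CFA}[Y^M_1]\simeq\widehat{CFA}[Y^M_2]$, the box differential on either side shifts both discrete indices by the same amount (one per $\rho_{23}$), so swapping the $\ZZ_{n_1}$ and $\ZZ_{n_2}$ labels is a chain isomorphism. You should remark that passing to a common model is legitimate because $\widehat{HFO}$ is already known to be independent of such choices (Theorem~4.1), but otherwise this part is fine.
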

 
\begin{proof}
 Under the hypothesis, ${Y^{M}_i}$ and $N_i$, $i=1, 2$, give two Heeegaard splittings of the same  manifold $Y^{M}$. The matrix of the Heegaard gluing map is entirely determined by the boundary  parametrization of $Y^{M}_i$ as $F$ comes from a common $D_1$ structure as described in the previous section. The gluing maps can be that of the sphere $S^3$,  $S^{2} \times S^{1}$, or a lens space $L(p,q)$, as only these manifolds allow genus one splittings.
 
 The gluing matrix with respect to the standard bases in the sphere case  has first column $(0,1)$ and second column
 $(-1,m)$, $m \in \mathbb{Z}$, since it is obtained by attaching a two cell to the longitudinal circle. Now,  since  the diffeomorphism
  $$ f_m: S^1 \times D^2 \longrightarrow S^1 \times D^2, \quad (e^{\theta_1},t e^{\theta_2}) \longmapsto (e^{\theta_1}, t e^{-m \theta _1+\theta_2}), \, 0 \leq t \leq 1, $$  maps
   the boundary longitude to $(1,-m)$, the bordered manifold $Y^{M}_1$ is equivalent to a bordered manifold with a parametrization which induces the same map at the boundary up to isotopy with $Y^{M}_2$. This means that ${\psi_1}^{-1} \psi_2$, where $\psi_i$ is the parametrization of the boundary of $Y^{M}_i$, is isotopic to the identity. Any map isotopic to identity in the torus can be extended to the whole solid torus. So, the parametrization $\psi_1$ of $Y^{M}_1$ is bordered equivalent to $\psi_2$ of $Y^{M}_2$.
 
 For $S^{2} \times S^{1}$ the first column is $(1,m)$ and the second
 column is $(0,1)$ since the two cell is attached along  the meridian. Again this is similar to the the sphere case as  $(1,m)$ are
 isotopic to $(1,0)$ in solid tori. So $Y^{M}_i$ are bordered equivalent in this case also.
 
 For the lens space $L(p,q)$, $p>1$ and $(q,p)=1$, the two cell is attached along
 $p\lambda + q\mu $ where $\lambda$ is the longitude and $\mu$
 the meridian. We will get exactly  one matrix for a given $(p,q)$  from the equivalent parametrizations if we restrict the  absolute values of the entries to be $\leq p $.
   
   But, for the cases when (i) $qq^{\prime}\cong 1 \,{\rm mod} (p)$, (ii) $q + q^{\prime} \cong 0 \, {\rm mod} (p)$, and (iii)
 $qq^{\prime} \cong -1 \, {\rm mod}\, (p)$, $L(p,q)$ and $L(p, q^{'})$
 are  diffeomorphic lens spaces. The lens space $L(p,q)$ is obtained by
 taking the quotient of the action of the cyclic group $\ZZ_p$ on $S^{3} =\{ (z_1, z_2) \in \CC^2:\, \mid z_1 \mid + \mid z_2 \mid =1  \}$ generated by the map
 $$ (z_1, z_2) \longmapsto ( e^{\frac{2\pi i}{p}}z_1, e^{\frac{2 \pi i \, q}{p}} z_2)\,.$$
 The splitting is given by two solid tori $ \mid z_1 \mid \leq \frac{1}{2}$ and 
 $\mid z_2 \mid \leq \frac{1}{2}$. The diffeomorphism from $L(p, q)$ to $L(p, q^{'})$
   in  case (i) is induced by the map $(z_1, z_2) \mapsto  (z_2, z_1)$. Since  this is an orientation reversing map, it 
   induces an orientation reversing map between $N_i$'s.   But this is not possible since they share the same $D_1$ parametrization.
   Similarly, in case (ii) we have the diffeomorphism $\arg(z_2) \mapsto -\arg(z_2)$. This is also orientation reversing and hence not possible. Similar reasoning holds in case (iii) as well.
   
    If we allow for larger values of $\mid q \mid >p$ , and let $q= q^{\prime} + mp$ where $0<q^{\prime} < p $
 then $p\lambda + q\mu = p(\lambda +m\mu) +q^{\prime}\mu$. This reduces
 to the previous case as $\lambda + m\mu$ is isotopic to $\lambda$
 in the solid torus. If $\mid q \mid < p$ and we do not restrict the entries,  the inverse transformation maps the meridian to $(p,q)$ and the longitude to $(m + kp, \, n+ kq)$ where $\mid m \mid < p$ and $ \mid n \mid < p $. So its inverse will
   map $(p,q)$ to the meridian $\mu$ and $(m,n)$ to $\lambda -k \mu$, which reduces to the previous cases as $\lambda -k\mu$
    is isotopic to the longitude $\lambda$  in a solid torus.
 
  Thus, we conclude that the two bordered manifolds ${Y}^{M}_{i}$,  $i=1, 2$, are equivalent.
  \end{proof}

  \begin{example}\label{example: lens space}
Think of $L(p, -q)$ as two copies of $D_2\times S^{1}$
 glued together. Singularize the two knots lying in each copy of $D^{2}\times S^{1}$ such that each knot is homotopic to $\{0\} \times S^{1}$. The copies of $D^{2}\times S^{1}$  will be $N_i$'s and $K_i$ is the core of $N_i$. Take $p \geq  2$, $1 \leq  q \leq p-1$, and
$gcd(p, q) = 1$. Then figure $14(A)$ of \cite{BW} gives a bordered Heegaard diagram for ${Y_i}^{M}$. The induced
type $A$ structure $\widehat{CFA} [{Y_i}^{M}, \phi_{\partial {Y_i}^{M}}]$  is shown in Figure $14(B)$ of \cite{BW}. It has $p$
generators $y_i$. Note that this $A$ structure is bounded. Thus, $\widehat{CFA}^{orb}[Y_2]$ is sum of $n_1$ copies of $\widehat{CFA}[{Y_1}^{M}]$  thus having $p n_1$ generators $y_i \otimes x_j$  and the differential
of its box product with $D_{n_2}$ is trivial. The homology 
$\widehat{HFO}^{12}_{*}(Y^{orb})$
is given by $$ \ZZ_2(  y_i \otimes x_j \otimes x_k^{\prime} : 1\leq i \leq p,\, 1\leq j \leq n_1,\, 1\leq k \leq n_2  ) \cong (\ZZ_2)^{p n_1 n_2} \,.$$
 We get the same result for the other invariant $\widehat{HFO}^{21}_{*}(Y^{orb})$  by Lemma \ref{equiv}.
  \end{example}
  
\section{Multiple component singular locus}

 
 First, consider the case when the orbifold locus consists of three knots. So, let $Y$ be a $3$-orbifold with cyclic singularities along knots $K_i$, $i=1,2,3$. Let $N_i$
 be a regular neighbourhood of $K_i$ parametrized by $(D^2 \times S^1)/\ZZ_{n_i}$.
 Let $Y_i = Y-N_i$.  Let $Y_{i}^{M}$ be the manifold with boundary obtained by replacing each $N_j$, $j \neq i$, with its smooth branched cover. 
 
 We can use $n_1$ copies of $\widehat{CFA}[Y_1^M]$, as in the previous section, to construct an $A$ structure 
 $\widehat{CFA}^{orb}[Y_{23}] $ such that 
 $$\widehat{CFA}^{orb}[Y_{23}] \boxtimes D_1 =  \widehat{CFA}[Y_1^M] \boxtimes D_{n_1}\,.  $$
 
 Then, we can use $n_2$ copies of $\widehat{CFA}^{orb}[Y_{23}] $ to construct an $A$ structure $\widehat{CFA}^{orb}[Y_{23,3}]$ that satisfies
 $$\widehat{CFA}^{orb}[Y_{23,3}] \boxtimes D_1 =  \widehat{CFA}^{orb}[Y_{23}] \boxtimes D_{n_2}\,.  $$
 
 Finally, we define the orbifold Heegaard Floer homology 
 $$ \widehat{HFO}^{123}_{*}(Y^{orb}) := H_{*}(\widehat{CFA}^{orb}[Y_{23,3}] \boxtimes D_{n_3}) \,.$$
 
  The invariant obtained above may depend on the choice of ordering on the three knots in the  construction. Note that the same procedure
 generalizes to the situation where the orbifold locus has $N$ components, whence we obtain $N!$ invariants corresponding to different orderings of the $N$ components. 
   However, we know that the ranks many of these homology groups coincide in some special cases.
 
 \begin{example}\label{sub:r-surg} Let $\epsilon$ be the knot invariant discussed in \cite[Subsection 2.4]{BW}. 
 Let $Y$ be a cyclic $3$-orbifold with singularities along $N$ knots of orders $n_1, \ldots, n_N$. Assume that the underlying manifold of $Y$ is obtained from $r$-surgery of a singular knot $K$  in $S^{3}$  with the knot invariant $\epsilon(K)=-1$  or $ 1$. Then for $(N-1)!$ out of the $N!$ orbifold Heegaard Floer homologies, the $A$ structures mainly depend on the $A$ structure of the complement of $K$. The rank of each of these is $\prod_{j=1}^N n_j$ times the rank of the Heegaard Floer homology of the underlying smooth manifold.



This may be argued, based upon \cite[Theorem 1.2]{BW}, as follows. Firstly, in \cite[Theorem 1.2]{BW}  the kernel and images of the boundary operator of the box products of the $D$ and $A$ structures in the smooth as well as the orbifold cases are given by nice generators, if the $A$ structures are obtained from the knot Floer chain complex of the knot $K$. In this case the structures are bounded. 
Consider first a single component singular locus. Then the result follows directly from \cite[Theorem 1.2]{BW}.
For the multiple component singular locus case, we start the iterative construction of $A$ structures by always taking $K_1$ to be the knot along which $r$-surgery has been performed. Then,  
the $A$ structure $\widehat{CFA}^{orb}[Y_{23 \ldots N}]$ will have $n_1$ times as many generators as $\widehat{CFA}[Y_{1}^M] $. The latter corresponds to the number of generators in the smooth case. After that,  we can choose any ordering of the remaining $(N-1)$ knot components. At each stage of the construction, the number of generators is multiplied by the order of singularity of the corresponding knot component. Thus, we have $(N-1)!$ invariants each of which have rank $\prod_{j=1}^N n_j$.
 \end{example}
 


\begin{thebibliography}{99}
 
 \bibitem{CS} O. Collin and B. Steer:
Instanton Floer homology for knots via 3-orbifolds,
J. Differential Geom. 51 (1999), no. 1, 149--202. 
 
 \bibitem{HRW}  J. Hanselman, R. Rasmussen, and L. Watson. Bordered Floer homology for manifolds
with torus boundary via immersed curves. arXiv:1604.03466v2.

\bibitem{HL} M. Hedden and A. Levine. Splicing knot complements and bordered Floer homology.
J. Reine Angew. Math., 720 (2016), 129--154.

\bibitem{KM} P. B.  Kronheimer and T. S. Mrowka: 
Knot homology groups from instantons, 
J. Topol. 4 (2011), no. 4, 835--918. 

\bibitem{Lip} R. Lipshitz: A cylindrical reformulation of Heegaard Floer homology.  Geom. Topol. 10 (2006), 955--1096.

\bibitem{LOT1} R. Lipshitz, P. Ozsvaath, and D. Thurston. Bimodules in bordered Heegaard Floer
homology. Geom. Topol. 19(2), 525--724, 2015.

\bibitem{LOT2}  R. Lipshitz, P. Ozsvaath, and D. Thurston. Bordered Heegaard Floer homology: Invariance and pairing.  Mem. Amer. Math. Soc. 254 (2018), no. 1216, viii+279 pp.

\bibitem{OZ} Ozsvath, Peter; Szabo, Zoltan Holomorphic disks and topological invariants for closed three-manifolds. Ann. of Math. (2) 159 (2004), no. 3, 1027--1158     

\bibitem{BW} Biji Wong. A Floer homology invariant for 3-orbifolds via bordered  Floer theory, arxiv 1808: 09026v1 

\end{thebibliography}
\end{document}